\newcommand{\R}{\mathbb{R}}
\newcommand{\N}{\mathbb{N}}
\newtheorem{theorem}{Theorem}[section]
\newtheorem{corollary}[theorem]{Corollary}
\newtheorem{lemma}[theorem]{Lemma}
\newtheorem{remark}[theorem]{Remark}
\renewcommand{\le}{\leqslant}
\renewcommand{\ge}{\geqslant}
\begin{document}

\title[Anisotropic nonlocal operators]{Regularity and rigidity theorems \\
for a class of anisotropic nonlocal operators}

\thanks{The authors have been supported by the
ERC grant 277749 ``EPSILON Elliptic
Pde's and Symmetry of Interfaces and Layers for Odd Nonlinearities''}

\author{Alberto Farina}

\address{Alberto Farina:
LAMFA -- CNRS UMR 6140 --
Universit\'e de Picardie Jules Verne --
Facult\'e des Sciences --
33, rue Saint-Leu --
80039 Amiens CEDEX 1, France --
Email:
{\tt alberto.farina@u-picardie.fr}  
}

\author{Enrico Valdinoci}

\address{Enrico Valdinoci:
University of Melbourne --
School of Mathematics and Statistics --
813 Swanston St, Parkville VIC 3010
-- Australia -- and --
University of Western Australia --
School of Mathematics and Statistics --
35 Stirling Highway --
Crawley, Perth
WA 6009, Australia -- and --
Consiglio Nazionale delle Ricerche --
Istituto di Matematica Applicata e Tecnologie Informatiche --
via Ferrata, 1 --
27100 Pavia, Italy -- and --
Universit\`a degli Studi di Milano -- Dipartimento di Matematica --
Via Cesare Saldini, 50 -- 20133 Milano, Italy
-- and --
Weierstra{\ss}-Institut f\"ur Angewandte Analysis und Stochastik --
Mohrenstra{\ss}e, 39 --
10117 Berlin, Germany --
Email:
{\tt enrico@math.utexas.edu} 
}

\begin{abstract}
We consider here operators which are sum of (possibly) fractional
derivatives, with (possibly different) order. 
The main constructive assumption is that the operator is
of order~$2$ in one variable.
By constructing an
explicit barrier, we prove a Lipschitz estimate
which controls the oscillation of the solutions
in such direction with respect to the oscillation of
the nonlinearity in the same direction.

As a consequence, we obtain a rigidity result
that, roughly speaking, states that if the nonlinearity
is independent of a coordinate direction, then so is any global
solution 
(provided that the solution does not grow too much at infinity).
A Liouville type result then follows as a byproduct.
\end{abstract}

\subjclass[2010]{35R11, 35B53, 35R09.}
\keywords{Nonlocal anisotropic integro-differential equations, 
regularity results.}

\maketitle

\section{Introduction}

Recently a good deal of research has been performed about
nonlocal operators of fractional type, also
in consideration of their probabilistic interpretation
of L\'evy processes. In this framework, it is natural to consider
the superposition of different nonlocal operators
in different directions, possibly with different (fractional)
orders, in relation with the nonlocal diffusive
equations in anisotropic media.\medskip

A first attempt to systematically
study these anisotropic fractional operators
was given in~\cite{SRO, ROV, SROV}.
In particular, the regularity theory of these
anisotropic operators is perhaps harder than expected,
it still presents several open questions 
and some lack of regularity occurs in concrete examples
(for instance, solutions of rather simple equations
with smooth data and smooth domains may fail in this case
to be smooth, see Theorem~1.2 in~\cite{ROV}).
Roughly speaking, the lack of regularity may be caused
by the combination of the nonlocal 
properties of the operator
and the anisotropic structure
of the operator. Namely, first the nonlocal feature may cause
the solution to be only H\"older continuous at the boundary;
then the anisotropic structure may relate the solution in the
interior to values at (or close to) the boundary,
and the nonlocal effect can somehow ``propagate''
the boundary singularity towards the interior, making a smooth
interior regularity theory false in this case
(see~\cite{ROV} for more details about it).
\medskip

The goal of this paper is to provide a very simple approach
to a Lipschitz-type regularity theory for
a family of anisotropic integro-differential
operators, obtained by the superposition
of different operators in different coordinate directions,
and possibly with different order of differentiation.\medskip

The main structural assumption that we take is that
there is one ``special'' coordinate (say the last one)
in which the operator is local and of second order.
In this framework, we will control the derivative
of the solution in this variable by uniform and universal quantities,
depending on the data of the problem.\medskip

More precisely,
the mathematical framework in which we work is the following.
We denote by~$\{e_1,\dots,e_n\}$ the Euclidean base of~$\R^n$.
Given a point~$x\in\R^n$, we use the notation
$$ x=(x_1,\dots,x_n)=x_1 e_1+\dots + x_n e_n,$$
with~$x_i\in\R$. 

We divide the variables of~$\R^n$ into $m$ subgroups of variables,
that is we consider~$m\in\N$ and~$N_1,\dots,N_m\in\N$, with
$N_1+\dots+N_{m-1}=n-1$.
For~$i\in \{1,\dots,m\}$, we use the notation~$N'_i:= N_1+\dots+N_i$,
and we take into account
the set of coordinates
\begin{equation}\label{JK905L:gh}
\begin{split}
& X_1 := (x_1,\dots, x_{N_1}) \in \R^{N_1}\\
& X_2 := (x_{N_1+1},\dots, x_{N_2'}) \in \R^{N_2}\\
& \vdots\\
& X_i := (x_{N_{i-1}'+1},\dots, x_{N_i'}) 
\in \R^{N_i}\\
& \vdots\\
& X_{m-1} := (x_{N_{m-2}'+1},\dots, x_{N_{m-1}'}) 
\in \R^{N_{m-1}}\\
{\mbox{and }}\qquad& X_m:=x_n.
\end{split}\end{equation}
Given~$i\in\{1,\dots,m-1\}$ and~$s_i\in (0,1]$ we will
consider the (possibly fractional) $s_i$-Laplacian 
in the $i$th set of coordinates~$X_i$.
For this scope, given~$y=(y_1,\dots,y_{N_i})\in \R^{N_i}$
it is useful to consider the increment induced by~$y$
with respect to the $i$th set of coordinates in~$\R^n$,
that is one defines
\begin{equation}\label{jU6yhwedcG90:lk}
y^{(i)} := y_1
e_{N_{i-1}'+1}+\dots+
y_{N_i}
e_{N_i'} \in \R^n.\end{equation}
With this notation, one can define
the $N_i$-dimensional 
(possibly fractional) $s_i$-Laplacian
in the $i$th set of coordinates~$X_i$,
for a (smooth) function~$u:\R^n\to\R$ by
\begin{equation}\label{FL} (-\Delta_{X_i})^{s_i} u(x) :=
\left\{
\begin{matrix}
-\partial^2_{x_{N_{i-1}'+1}} u(x)-\,\dots\,-
\partial^2_{x_{N_i'}} u(x)& {\mbox{ if }}s_i=1, \\
&\\ c_{N_i, s_i}
\displaystyle\int_{\R^{N_i}} \frac{2u(x)-u(x+y^{(i)})-u(x-y^{(i)})
}{|y^{(i)}|^{N_i+2s_i}}\,dy^{(i)}
& {\mbox{ if }}s_i\in(0,1),
\end{matrix}
\right. \end{equation}
The quantity~$c_{N_i,s_i}$ in~\eqref{FL}
is just a positive normalization constant, whose explicit\footnote{We
wrote the value of~$c_{N,s}$ as in~\eqref{cns} to be consistent
with the literature, see e.g.
notation of~\cite{dyda}. Of course, such value
can be equivalently written in other forms, according
to the different tastes.
The explicit value of the normalization constant
in~\eqref{cns} 
plays no major role in this paper, but
it is useful for consistency properties as~$s_i\to 1$.}
value for~$N\in\N$ and $s\in(0,1)$ is taken to be
\begin{equation}\label{cns}
c_{N,s}:=
\frac{2^{2s-1}\,\Gamma(s+\frac{N}{2})}{\pi^{\frac{N}{2}}\,|\Gamma(-s)|},\end{equation}
where~$\Gamma$ is the Euler's Gamma Function.
We refer to~\cite{land, silv, guida} and to the references
therein for further motivations about fractional operators.
\medskip

In this paper we consider a
pseudo-differential operator, which is the sum of
(possibly) fractional Laplacians in the different coordinate directions~$X_i$,
with~$i\in\{1,\dots,m-1\}$, plus
a local second derivative in the direction~$x_n$.
The operators involved may have
different orders and they may be multiplied
by possibly different coefficients:
that is, given~$a_1,\dots,a_{m-1}\ge0$ and~$a>0$,
we define
\begin{equation}\label{L} 
\begin{split}
L\,&:=\sum_{i=1}^{m-1} a_i (-\Delta_{X_i})^{s_i} -a\partial^2_{x_n}\\
&=\sum_{i=1}^{m} a_i (-\Delta_{X_i})^{s_i},\end{split}\end{equation}
where in the latter identity
we used the convention that~$a_m:=a$, $s_1,\dots,s_{m-1}\in(0,1]$
and~$s_m:=1$.

Given the operator in~\eqref{L}, we stress that a very 
important structural difference
with respect to the classical local case is that
fractional objects are in general not reduced to
the sum of their directional components\footnote{That is,
if~$x=(x_1,\dots,x_n)\in\R^n$ the following formulas
are false, unless~$s=1$:
\begin{eqnarray*}
&&(-\partial^2_{x_1})^s +\dots+(-\partial^2_{x_n})^s
=(-\Delta_{x})^s\\
{\mbox{and }}&&
(-\Delta_{(x_1,\dots,x_N)})^s+
(-\Delta_{(x_{N+1},\dots,x_{N+K})})^s
=(-\Delta_{(x_1,\dots,x_{N+K})})^s
.\end{eqnarray*}}
\medskip

The main result that we prove in this paper
is a Lipschitz regularity theory in the last coordinate variable
that extends the one of~\cite{brandt69} (which was obtained
in the classical setting of local operators).
To this goal, we denote by~$B^N_r$ the open ball
of~$\R^N$ centered at the origin and with radius~$R$.
Also,
given~$d_1,\dots,d_m>0$, we set~$d:=(d_1,\dots,d_m)$
and
$$ Q_d := B^{N_1}_{d_1} \times\dots\times
B_{d_{m-1}}^{N_{m-1}}
\times (-d_m,d_m) =\prod_{i=1}^m B_{d_i}^{N_i},$$
where in the latter identity
we used the convention that~$N_m:=1$.

Also, given~$\kappa>0$
we denote by~$Q_{d,\kappa}$ the dilation of factor~$\kappa$
in the last coordinate (leaving the others fixed), 
that is
$$ Q_{d,\kappa}:=
B^{N_1}_{d_1} \times\dots\times
B_{d_{m-1}}^{N_{m-1}}
\times (-\kappa d_m,\kappa d_m).$$
Of course, by construction~$Q_{d,1}=Q_d$.
In accordance with
the constant fixed in~\eqref{FL},
it is also convenient to introduce the following notation\footnote{We 
\label{etaii}
observe that~$\eta_i=1/(2N_i)$ if~$s_i=1$, since~$\Gamma\left(
1+\frac{N_i}{2}\right) = \frac{N_i}{2}\,\Gamma\left(\frac{N_i}{2}\right)$.}
for a suitable universal quantity, for any~$i\in \{1,\dots,m\}$:
\begin{equation}\label{etai}
\eta_i:=
\frac{ \Gamma\left(\frac{N_i}{2}\right)}{2^{2s_i}\,
\Gamma(s_i+1)\,
\Gamma\left(s_i+\frac{N_i}{2}\right)}.\end{equation}
With this notation, we have the following result:

\begin{theorem}\label{MAIN}
Let~$f:Q_{d,2}\to\R$ and~$u:\R^n\to\R$ be a solution of~$Lu=f$ in~$Q_{d,2}$.
Then, for any~$t\in (-d_m,d_m)$,
\begin{equation}\label{STIMA:MAIN}
\frac{|u(te_n)-u(-te_n)|}{|t|} \le
\frac{ d_m\,{\mathfrak{S}} }{a}
+\frac{ \tilde C\,d_m \,\|u\|_{L^\infty(\R^{n})}}{
\displaystyle\min_{i\in\{1,\dots,m\}} (\eta_i d_i^{2s_i}) },
\end{equation}
where
\begin{eqnarray*} 
{\mathfrak{S}} &:=&
\sup_{ (x,t)\in Q_d\times(0,d_m)} |f(x+te_n)-f(x-te_n)|,
\\{\mbox{and }}\quad
\tilde C&:=& 
\frac{2(a_1+\dots+a_m)}{a} + 1.\end{eqnarray*}
\end{theorem}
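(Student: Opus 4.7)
The strategy is an odd-reflection argument followed by comparison with an explicit barrier built from the torsion functions of the coordinate balls.

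Set $w(x):=u(x)-u(x',-x_n)$. Since each fractional Laplacian $(-\Delta_{X_i})^{s_i}$ with $i<m$ acts only on coordinates orthogonal to $e_n$ and $-a\partial_{x_n}^2$ commutes with the reflection $x_n\mapsto -x_n$, the operator $L$ preserves antisymmetry in $x_n$. Hence $w$ is odd in $x_n$, $w(x',0)\equiv 0$, $w(te_n)=u(te_n)-u(-te_n)$, $\|w\|_{L^\infty(\R^n)}\le 2\|u\|_{L^\infty(\R^n)}$, and $Lw=g$ on $Q_{d,2}$, where $g(x):=f(x)-f(x',-x_n)$ satisfies $|g|\le\mathfrak{S}$ on $Q_d$ (apply the definition of $\mathfrak{S}$ with base point $(x',0)\in Q_d$ and shift $t=x_n\in(0,d_m)$). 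It suffices to show $|w(te_n)|/|t|$ is bounded by the right-hand side of~\eqref{STIMA:MAIN}.

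I construct a barrier $\Psi=\Psi_{\rm src}+\Psi_{\rm ext}$ on $\R^n$, odd in $x_n$, with $L\Psi\ge\mathfrak{S}\ge g$ on $Q_d\cap\{x_n>0\}$ and $\Psi\ge 2\|u\|_{L^\infty(\R^n)}\ge w$ on $(\R^n\setminus Q_d)\cap\{x_n>0\}$. For the source part, the one-variable piecewise-quadratic function
$$\Psi_{\rm src}(x_n):=\frac{\mathfrak{S}}{2a}\bigl(2d_m|x_n|-x_n^2\bigr)_+\,\mathrm{sgn}(x_n)$$
is odd and $C^1$ on $\R$, it satisfies $L\Psi_{\rm src}=\mathfrak{S}$ on $\{0<x_n<2d_m\}$ (the terms $(-\Delta_{X_i})^{s_i}\Psi_{\rm src}$ for $i<m$ vanish because $\Psi_{\rm src}$ depends only on $x_n$), and $\Psi_{\rm src}(te_n)\le\mathfrak{S}d_m t/a$, which matches the first term on the right-hand side of~\eqref{STIMA:MAIN}. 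For $\Psi_{\rm ext}$, I use the torsion functions $\phi_i$ of the balls $B_{d_i}^{N_i}$ under $(-\Delta_{X_i})^{s_i}$: by a Getoor/Dyda computation they are nonnegative, vanish outside $B_{d_i}^{N_i}$, and satisfy $\|\phi_i\|_{L^\infty(\R^{N_i})}=\phi_i(0)=\eta_i d_i^{2s_i}$, consistently with the constant defined in~\eqref{etai} (and the footnote \ref{etaii} in the local case $s_i=1$). I then take $\Psi_{\rm ext}$ to be a linear-in-$x_n$ combination of the $\phi_i(X_i)/(\eta_i d_i^{2s_i})$, with odd extension in $x_n$, scaled by a factor of order $\|u\|_{L^\infty(\R^n)}/\min_i(\eta_i d_i^{2s_i})$, so that $\Psi_{\rm ext}$ dominates $2\|u\|_{L^\infty(\R^n)}$ on the exterior. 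When $L$ is applied to this barrier, each direction $i$ contributes a term proportional to $a_i x_n/(\eta_i d_i^{2s_i})$; the cross-term from $-a\partial_{x_n}^2(x_n\cdot\phi_m(x_n))$ in the local direction produces an extra constant, which together give the coefficient $\tilde C=2\sum_i a_i/a+1$ and the factor $d_m$ in the final estimate.

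Apply the maximum principle to $\Psi-w$ on $\Omega^+:=Q_d\cap\{x_n>0\}$: a negative interior minimum at some $x_0\in\Omega^+$ would force $-a\partial_{x_n}^2(\Psi-w)(x_0)\le 0$ (interior min in the $x_n$-direction), and for $i<m$ the nonlocal integrals $(-\Delta_{X_i})^{s_i}(\Psi-w)(x_0)$ sample only the horizontal slice $\{x_n=x_{0,n}>0\}$, on which $\Psi-w\ge(\Psi-w)(x_0)$ by interior-minimality in $\Omega^+$ and the exterior bound elsewhere on that slice; hence these terms are also $\le 0$, contradicting $L(\Psi-w)\ge 0$. Therefore $\Psi\ge w$ on $\Omega^+$, and evaluating at $te_n$ for $t\in(0,d_m)$ yields the upper estimate in~\eqref{STIMA:MAIN}. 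The matching lower estimate follows by symmetry $t\leftrightarrow -t$.

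The main obstacle is the explicit construction of $\Psi_{\rm ext}$: it must dominate $2\|u\|_{L^\infty(\R^n)}$ on the whole exterior $(\R^n\setminus Q_d)\cap\{x_n>0\}$---including regions where $x_n$ is small but $X'$ lies outside the spatial tube $\prod_{i<m}B_{d_i}^{N_i}$---while evaluating to at most $\tilde C d_m\|u\|_{L^\infty(\R^n)}t/\min_i(\eta_i d_i^{2s_i})$ at $te_n$. The appearance of $\min_i(\eta_i d_i^{2s_i})$ reflects that the direction with the smallest torsion-function maximum dictates the barrier's scale, and $\tilde C$ encodes the inter-direction coupling via $L$, with the $a$ in the denominator coming from the sole locally diffusive term $-a\partial_{x_n}^2$.
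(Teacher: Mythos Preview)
Your odd-reflection set-up for $w$ is fine and the computation $Lw=g$ is correct, but the barrier construction has a genuine gap that you yourself flag in the final paragraph without resolving. You require $\Psi$ to be \emph{odd in $x_n$}, hence $\Psi(X',0)=0$ for every $X'$. By continuity, $\Psi(X',x_n)\to 0$ as $x_n\to 0^+$, uniformly in $X'$. But your maximum-principle step needs $\Psi\ge 2\|u\|_{L^\infty(\R^n)}$ on the lateral exterior $(\R^n\setminus Q_d)\cap\{x_n>0\}$, and this set contains points with $x_n$ arbitrarily small. These two requirements are incompatible unless $\|u\|_{L^\infty}=0$. Nor can you fall back on ``$w$ is also small there'': on the lateral exterior near $\{x_n=0\}$ one only knows $w(X',x_n)\approx 2\,\partial_{x_n}u(X',0)\,x_n$, and the coefficient $\partial_{x_n}u(X',0)$ is precisely the unknown quantity, so no barrier linear in $x_n$ with a universal slope can dominate it. A ``linear-in-$x_n$ combination of torsion functions'' therefore cannot serve as $\Psi_{\rm ext}$.

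The paper circumvents this by \emph{lifting to one more dimension}: it sets $v(x,t):=u(x+te_n)-u(x-te_n)$ on $\R^{n+1}$, splits the local coefficient as $a=(a-\nu)+\nu$, and works with
\[
L_*:=\sum_{i=1}^{m-1}a_i(-\Delta_{X_i})^{s_i}-(a-\nu)\partial_{x_n}^2-\nu\,\partial_t^2
\]
on $Q_d\times(0,d_m)$. The barrier $\Phi(x,t)$ then depends on \emph{both} $x_n$ and the auxiliary variable $t$; in particular $\Phi_2(x,t)=c_o-\sum_{i=1}^m\eta_i(d_i^2-|X_i|^2)_+^{s_i}-\tfrac12(d_m^2-t^2)_+$ does \emph{not} vanish on $\{t=0\}$ away from the origin, so it can be $\ge\|v\|_{L^\infty}$ on the lateral exterior for every $t>0$, while $\Phi(0,t)=O(t)$. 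The extra variable is exactly the degree of freedom your $n$-dimensional ansatz lacks, and the parameter $\nu\uparrow a$ is what produces the strict inequality needed for the maximum principle. Your approach collapses the pair $(x_n,t)$ to a single coordinate and thereby loses the room needed to satisfy the exterior bound and the linear-in-$t$ evaluation simultaneously.
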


Higher regularity results (for different types of nonlocal
anisotropic operators)
have been obtained in~\cite{SRO, ROV}
(indeed, general anisotropic operators
can be considered in~\cite{SRO, ROV},
but only the kernel with the same homogeneity were taken into account).
Some advantages are offered by Theorem~\ref{MAIN}
with respect to the other results available in the literature.
First of all, Theorem~\ref{MAIN} comprises the case of operators
of different orders (e.g. the $s_i$ can be all different
and both local and nonlocal operators can be superposed).
Moreover, Theorem~\ref{MAIN} may select the ``local'' coordinate direction
independently on the others, in order to take into account
the behavior of the nonlinearity in this single coordinate and detect
its effect on the oscillation of the solution (notice in particular
the term~${\mathfrak{S}}$
appearing in
Theorem~\ref{MAIN}, which only depends on the oscillation of~$f$
in the last coordinate direction). 
As a matter of fact, the diffusive operators in the other variables
can also degenerate (indeed~$a_i$ may vanish for some~$i\in\{1,\dots,m-1\}$).

In addition,
all the constants appearing in Theorem~\ref{MAIN} can be computed
explicitly without effort and the proof is rather simple and
it makes use only of one explicit barrier
(the barrier will be given in formula~\eqref{BA}
and, as a matter of fact, this argument may be seen as the fractional
counterpart of the regularity theory developed by~\cite{brandt69}
in the local framework).\medskip

As a technical remark, we point out that, for simplicity,
the notion of solution in Theorem~\ref{MAIN} is taken in the classical
sense, i.e. the function~$u$ will be implicitly assumed to be smooth
enough to compute the operator~$L$ pointwise (in this sense,
formula~\eqref{STIMA:MAIN}
reads as an ``a priori estimate''). Nevertheless, the same
argument that we present goes through, for instance, by applying the
operator to smooth functions that touch the
solution from above/below, that is one can assume simply that
the solution in Theorem~\ref{MAIN} is taken in the viscosity sense
(in this case,
formula~\eqref{STIMA:MAIN}
reads as an ``improvement of regularity'').\medskip

We point out that, as a simple consequence of
Theorem~\ref{MAIN}, we obtain an interior Lipschitz estimate
in the last variable:

\begin{corollary}\label{BR}
Let~$u:\R^n\to\R$ be a solution of~$Lu=f$ in~$B_1$.
Then
\begin{equation}\label{8ihn6tyufryury7}
\|\partial_{x_n} u\|_{L^\infty(B_{1/2})}\le C\,\big(
\|f\|_{L^\infty(B_1)}+\|u\|_{L^\infty(\R^n)}\big),\end{equation}
for some~$C>0$, depending on~$a_1,\dots,a_m$, $s_1,\dots,s_{m-1}$,
and~$N_1,\dots,N_{m-1}$.
\end{corollary}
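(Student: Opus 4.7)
The plan is to derive Corollary~\ref{BR} from Theorem~\ref{MAIN} by a translation argument followed by sending the parameter $t$ to zero in \eqref{STIMA:MAIN}. Fix an arbitrary $x_0\in B_{1/2}$ and consider the translate $v(x):=u(x+x_0)$. Since every summand of $L$ is translation invariant (each acts via either a directional second derivative or a convolution-type fractional Laplacian), the function $v$ solves $Lv(x)=g(x):=f(x+x_0)$ wherever $x+x_0\in B_1$, while $\|v\|_{L^\infty(\R^n)}=\|u\|_{L^\infty(\R^n)}$.

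Next I would choose all the radii $d_1,\dots,d_m$ equal to a single small value $d_0>0$ depending only on $n=1+N_1+\dots+N_{m-1}$. The cylinder $Q_{d,2}$ is contained in the Euclidean ball of radius $d_0\sqrt{n+3}$ centered at the origin, so taking $d_0<(2\sqrt{n+3})^{-1}$ guarantees $x_0+Q_{d,2}\subset B_1$ uniformly in $x_0\in B_{1/2}$. For this choice the hypotheses of Theorem~\ref{MAIN} hold for $v$ on $Q_{d,2}$, and the oscillation quantity in the theorem satisfies $\mathfrak{S}\le 2\|g\|_{L^\infty(Q_{d,2})}\le 2\|f\|_{L^\infty(B_1)}$. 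Substituting into \eqref{STIMA:MAIN} then gives
\[ \frac{|u(x_0+te_n)-u(x_0-te_n)|}{|t|}\le \hat C\bigl(\|f\|_{L^\infty(B_1)}+\|u\|_{L^\infty(\R^n)}\bigr) \qquad\forall\,t\in(-d_0,d_0), \]
where $\hat C$ collects the explicit constants $a$, $\tilde C$, $\eta_i$ and the fixed value of $d_0$, and therefore depends only on $a_1,\dots,a_m$, $s_1,\dots,s_{m-1}$, $N_1,\dots,N_{m-1}$.

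To conclude, I would let $t\to 0^+$: by the smoothness implicit in the classical-solution framework of Theorem~\ref{MAIN}, the left-hand side tends to $2|\partial_{x_n}u(x_0)|$, so
\[ |\partial_{x_n}u(x_0)|\le \tfrac{\hat C}{2}\bigl(\|f\|_{L^\infty(B_1)}+\|u\|_{L^\infty(\R^n)}\bigr), \]
and taking the supremum over $x_0\in B_{1/2}$ produces \eqref{8ihn6tyufryury7} with $C=\hat C/2$. The only delicate point in the whole argument is the elementary geometric bookkeeping that fixes $d_0$ uniformly for all basepoints $x_0\in B_{1/2}$; everything beyond that is direct substitution into Theorem~\ref{MAIN} followed by a difference-quotient limit. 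If one prefers to avoid the pointwise limit altogether, the displayed inequality already encodes a Lipschitz estimate in the $x_n$-direction across $B_{1/2}$ (take pairs of the form $y\pm te_n$ with midpoint $x_0\in B_{1/2}$), from which the $L^\infty$ bound on $\partial_{x_n}u$ follows.
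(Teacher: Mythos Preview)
Your argument is correct and is precisely the ``simple consequence of Theorem~\ref{MAIN}'' that the paper alludes to without spelling out: translate to center at an arbitrary $x_0\in B_{1/2}$, apply Theorem~\ref{MAIN} with all $d_i$ equal to a fixed small $d_0$ so that $Q_{d,2}\subset B_{1/2}$, bound $\mathfrak{S}$ by $2\|f\|_{L^\infty(B_1)}$, and pass to the limit $t\to 0$ in the difference quotient. The paper gives no further details for Corollary~\ref{BR}, so your write-up matches the intended approach.
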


As a matter of fact, when~$s_1=\dots=s_m=1$,
Corollary~\ref{BR} reduces to the classical Lipschitz
regularity theory as presented in~\cite{brandt69}.\medskip

We observe that the regularity results obtained
in this paper can be also combined efficiently
with other results available in the literature,
possibly leading to higher regularity results.
To make a simple example of this feature, we give the following result:

\begin{corollary}\label{RRO}
Let~$s\in(0,1)$, $a_1,\dots,a_{m-1},\,a>0$ and
\begin{equation}\label{OP:56A} 
L_* :=\sum_{i=1}^{m-1} a_i (-\Delta_{X_i})^{s} -a\partial^2_{x_n}.\end{equation}
Let~$f\in L^\infty(\R^n)$ be Lipschitz continuous in~$B_1$ with respect to
the variable~$x_n$.
Let~$u:\R^n\to\R$ be a solution of~$L_* u=f$ in~$B_1^{n-1}\times\R$.
Then
\begin{equation*}
\|u\|_{C^\gamma (B_{1/2})}\le C\,\big(
\|f\|_{L^\infty(\R^n)}+
\|\partial_{x_n}f\|_{L^\infty(B_1)}+
\|u\|_{L^\infty(\R^n)}\big),\end{equation*}
where
$$ \gamma:=\left\{
\begin{matrix}
2s & {\mbox{ if }} s< 1/2,\\
1-\epsilon & {\mbox{ if }} s\ge 1/2
\end{matrix}
\right.$$
for some~$C>0$, depending on~$a_1,\dots,a_m$, $s$,
and~$N_1,\dots,N_{m-1}$ (with the caveat that
when~$s\ge 1/2$, one can choose~$\epsilon$
arbitrarily in~$(0,1)$ and~$C$ will also depend on~$\epsilon$).
\end{corollary}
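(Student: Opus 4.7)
The strategy for Corollary~\ref{RRO} is to split the regularity analysis into two parts: Theorem~\ref{MAIN} (equivalently Corollary~\ref{BR}) handles the $x_n$ variable, in which $L_*$ is a local second-order operator, while the remaining directions $X_1,\dots,X_{m-1}$ are handled by the existing H\"older regularity theory for anisotropic fractional operators of uniform order~$s$ from~\cite{SRO, ROV}. The bridge between the two is a pointwise bound on $\partial^2_{x_n}u$ obtained by bootstrapping Corollary~\ref{BR} once.

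First I apply the rescaled and translated form of Corollary~\ref{BR} at every point of $B_{1/2}$ to obtain
$$\|\partial_{x_n}u\|_{L^\infty(B_{3/4})} \le C\bigl(\|f\|_{L^\infty(\R^n)}+\|u\|_{L^\infty(\R^n)}\bigr).$$
Next, to obtain control on $\partial^2_{x_n}u$, I consider the incremental quotients $w_h(x):=h^{-1}\bigl(u(x+he_n)-u(x)\bigr)$, which solve $L_* w_h(x) = h^{-1}\bigl(f(x+he_n)-f(x)\bigr)$; the Lipschitz-in-$x_n$ hypothesis on $f$ bounds the right-hand side uniformly in $h$ on $B_1$ by $\|\partial_{x_n} f\|_{L^\infty(B_1)}$, while the previous step bounds $w_h$ uniformly on $B_{3/4}$. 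Since Corollary~\ref{BR} demands a global $L^\infty$ bound, I multiply $w_h$ by a smooth cutoff $\chi$ equal to $1$ on $B_{5/8}$ and vanishing outside $B_{3/4}$; the commutator $L_*(\chi w_h)-\chi L_* w_h$ produces only a bounded error in $B_{1/2}$, because the local term $-a\partial^2_{x_n}$ is insensitive to the cutoff outside the cutoff region, while each nonlocal $(-\Delta_{X_i})^s$ sees only bounded tails of $(1-\chi)w_h$. Applying Corollary~\ref{BR} to $\chi w_h$ and letting $h\to 0$ yields $\|\partial^2_{x_n}u\|_{L^\infty(B_{1/2})} \le C\bigl(\|\partial_{x_n}f\|_{L^\infty(B_1)}+\|f\|_{L^\infty(\R^n)}+\|u\|_{L^\infty(\R^n)}\bigr)$.

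With $\partial^2_{x_n}u$ bounded on $B_{1/2}$, the equation rewrites as
$$\sum_{i=1}^{m-1}a_i(-\Delta_{X_i})^s u \;=\; f + a\,\partial^2_{x_n}u,$$
whose right-hand side is now in $L^\infty(B_{1/2})$. The H\"older regularity theorems of~\cite{SRO, ROV} for sums of fractional Laplacians of the same homogeneity $s$ then deliver H\"older continuity of $u$ in the variables $(X_1,\dots,X_{m-1})$ with the stated exponent ($\gamma=2s$ when $s<1/2$ and $\gamma=1-\epsilon$ when $s\ge 1/2$, the latter reflecting the usual logarithmic loss at the critical order). Combining this with the Lipschitz bound in $x_n$ from the first step gives the desired $C^\gamma(B_{1/2})$ estimate with the claimed dependence on the data. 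The main obstacle is the middle step: one must carefully justify the differentiation of the equation via incremental quotients and handle the commutator from the cutoff, since a naive application of Corollary~\ref{BR} to $\partial_{x_n}u$ is blocked by the fact that this function is only locally, rather than globally, bounded.
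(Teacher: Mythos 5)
Your proposal follows the same three-step route as the paper's proof: (i) a Lipschitz bound on $u$ in the $x_n$ variable from Corollary~\ref{BR}; (ii) a bound on $\partial^2_{x_n}u$ obtained by applying Corollary~\ref{BR} a second time, to an incremental quotient in the $x_n$ direction; (iii) moving $a\,\partial^2_{x_n}u$ to the right-hand side and invoking the H\"older theory of~\cite{SRO} for the purely tangential operator $\sum_{i=1}^{m-1}a_i(-\Delta_{X_i})^{s}$ acting on each slice $\{x_n=\tilde x_n\}$. Two points deserve comment. First, the one place where you deviate --- inserting a cutoff $\chi$ before the second application of Corollary~\ref{BR} --- does not achieve what you claim: the assertion that each $(-\Delta_{X_i})^{s}$ ``sees only bounded tails of $(1-\chi)w_h$'' is unjustified, because translating $x\in B_{1/2}$ in an $X_i$-direction leaves the cylinder $B_1^{n-1}\times\R$ where the equation holds, and there $w_h$ is controlled only by $2\|u\|_{L^\infty(\R^n)}/h$, which is not uniform as $h\to0$. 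To be fair, the paper's own treatment of this point is equally informal (it writes $\|u_\sharp\|_{L^\infty(\R^n)}\le\|\partial_{x_n}u\|_{L^\infty(\R^n)}$ and bounds the latter by Corollary~\ref{BR}, which only controls $\partial_{x_n}u$ inside the cylinder), so you are not doing worse than the source, but your commutator estimate as stated is not correct.

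Second, the step you dispatch with a generic citation of~\cite{SRO,ROV} is where the paper does most of its actual work: one cannot simply quote a H\"older theorem for ``sums of fractional Laplacians,'' because the operator $\sum_i a_i(-\Delta_{X_i})^{s}$ on the slice is a stable operator whose spectral measure $\mu=\sum_i\tfrac{a_i c_{N_i,s_i}}{2}{\mathcal{H}}_i$ is supported on the lower-dimensional union of the spheres $S^{N_i-1}\subset S^{n-2}$, and Theorem~1.1(a) of~\cite{SRO} applies only after verifying the nondegeneracy condition $\inf_{\nu\in S^{n-2}}\int_{S^{n-2}}|\nu\cdot\theta|^{2s}\,d\mu(\theta)\ge\lambda>0$ together with $\mu(S^{n-2})<\infty$. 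The paper carries out this verification explicitly (using that some component $|\nu_j|\ge(n-1)^{-1/2}$); your argument should include it, or at least acknowledge that the applicability of the quoted theorem to this degenerate-looking spectral measure is the point being checked.
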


Another interesting consequence of Theorem~\ref{MAIN}, is also
the following rigidity result, valid when all the fractional
exponents are larger than~$1/2$:

\begin{theorem}\label{MAIN 2}
Let~$f:\R^n\to\R$.
Assume that
$$ \sigma:= 2\min\{s_1,\dots,s_n\}-1>0.$$
Let~$u:\R^n\to\R$ be a solution of~$Lu=f$ in the whole of~$\R^n$.
Assume that~$f$ does not depend on the $n$th coordinate
and that\footnote{As customary, the notation in~\eqref{XRT7}
simply means that
$$ \lim_{R\to+\infty} \frac{ \|u\|_{L^\infty(B_R)} }{R^{\sigma}}=0.$$}
\begin{equation}\label{XRT7}
\|u\|_{L^\infty(B_R)} = o(R^{\sigma}) \end{equation}
as~$R\to+\infty$.

Then~$u$ does not depend on the $n$th coordinate.
\end{theorem}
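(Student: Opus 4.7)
The plan is to apply Theorem~\ref{MAIN} on larger and larger cylinders, exploit the fact that the oscillation term~$\mathfrak{S}$ vanishes when~$f$ is independent of~$x_n$, and use the growth condition~\eqref{XRT7} to drive the remaining term to zero. Since the operator~$L$, the equation $Lu=f$, and the assumption that~$f$ does not depend on~$x_n$ are all translation invariant, I fix an arbitrary basepoint $x_0\in\R^n$ and an arbitrary~$t>0$, replace~$u$ by~$u(\cdot+x_0)$, and reduce to showing $u(te_n)=u(-te_n)$.

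\textbf{Main estimate.} Next I apply Theorem~\ref{MAIN} with $d_1=\dots=d_m=R$ for a large parameter $R>2t$. Because $f(x+se_n)=f(x-se_n)$ for every $x\in Q_d$ and every $s\in(0,d_m)$, the quantity $\mathfrak{S}$ in~\eqref{STIMA:MAIN} is identically zero, and the estimate reduces to
\[
\frac{|u(te_n)-u(-te_n)|}{t}\le\frac{\tilde C\,R\,\|u\|_{L^\infty(\R^{n})}}{\displaystyle\min_{i\in\{1,\dots,m\}}\bigl(\eta_i\,R^{2s_i}\bigr)}\le\frac{C\,\|u\|_{L^\infty(\R^n)}}{R^{\,\sigma}},
\]
where I used $\min_i\eta_i R^{2s_i}\ge c\,R^{2\min_i s_i}=c\,R^{1+\sigma}$ for $R\ge 1$, so that the $R$ in the numerator cancels one power.

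\textbf{Polynomial growth and conclusion.} The delicate point is that the right-hand side still carries the global norm $\|u\|_{L^\infty(\R^n)}$, which may be infinite under~\eqref{XRT7}. I plan to resolve this by revisiting the explicit barrier in the proof of Theorem~\ref{MAIN}: the barrier lives in~$Q_{d,2}$, so the only way in which global information on~$u$ enters the estimate is through the tails of the fractional kernels $|y|^{-N_i-2s_i}$ from~\eqref{FL}. Since those tails are integrable against a polynomial of exponent strictly smaller than $2\min_i s_i$, a direct bookkeeping replaces $\|u\|_{L^\infty(\R^n)}$ by a constant multiple of $\|u\|_{L^\infty(B_{\kappa R})}$ for some universal $\kappa\ge 1$. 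Combined with~\eqref{XRT7}, the bound above becomes $o(1)$ as $R\to+\infty$, forcing $u(te_n)=u(-te_n)$. Since $x_0$ and $t$ were arbitrary, $u$ does not depend on~$x_n$, as claimed.

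\textbf{Main obstacle.} The only genuinely nontrivial step is this polynomial-growth strengthening of Theorem~\ref{MAIN}. Everything else is translation invariance and the observation that the isotropic choice $d_i=R$ exactly matches the exponent $\sigma=2\min_i s_i-1$ appearing in~\eqref{XRT7}. The key structural inequality that makes the strengthening work is $\sigma<2\min_i s_i$, which is automatic from~$\sigma>0$, and which guarantees that the tails of the anisotropic fractional kernels remain integrable against the polynomial growth allowed by~\eqref{XRT7}.
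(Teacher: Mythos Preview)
Your overall strategy matches the paper's: apply Theorem~\ref{MAIN} with $d_i=R$, use that $\mathfrak{S}=0$, send $R\to\infty$, and then translate. You also correctly isolate the only nontrivial point, namely that $\|u\|_{L^\infty(\R^n)}$ may be infinite under~\eqref{XRT7}. But your proposed fix is where the argument has a gap. Your diagnosis that ``the barrier lives in~$Q_{d,2}$, so global information enters only through the kernel tails'' misreads the proof of Theorem~\ref{MAIN}: the global norm enters through the constant~$A_2$ in~\eqref{0ighc0-bis}, which is chosen so that the \emph{bounded} barrier~$\Phi$ dominates~$|v|$ everywhere outside~$Q_d\times(0,d_m)$. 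This domination is required because the nonlocal maximum principle (the proof of~\eqref{P8}) needs the interior minimum of~$h=\Phi\pm v$ to be a \emph{global} minimum in~$\R^{n+1}$ in order to conclude $(-\Delta_{X_i})^{s_i}h(\bar p)\le0$. If~$v$ is unbounded, no finite~$A_2$ achieves this and $h$ need not attain a global minimum at all, so ``revisiting the barrier'' does not by itself produce the localization you assert.

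The paper's actual mechanism is a cutoff on~$u$, not on the barrier: one sets~$\tilde u:=\eta_R u$ with~$\eta_R$ supported in a cube of scale~$R$, so that~$\tilde u$ is genuinely bounded by~$\|u\|_{L^\infty(B_{CR})}$, and then controls the commutator~$L\tilde u-Lu$ by the tail integral
\[
\int_{2R}^{\infty}\frac{\|u\|_{L^\infty(B_{2\rho}\setminus B_{\rho/2})}}{\rho^{1+2s_{\min}}}\,d\rho
\]
(Lemma~\ref{CURT} and Corollary~\ref{co cut}). Theorem~\ref{MAIN} applied to the bounded function~$\tilde u$ then yields the weighted estimate of Theorem~\ref{MAIN WEIGHTED}, and both the localized norm term~$R^{1-2s_{\min}}\|u\|_{L^\infty(B_{6R})}$ and the tail integral vanish as~$R\to\infty$ under~\eqref{XRT7} (the latter via L'H\^opital). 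Note in particular that the correct replacement for~$\|u\|_{L^\infty(\R^n)}$ is not a single norm~$\|u\|_{L^\infty(B_{\kappa R})}$ for some fixed~$\kappa$, but includes this weighted integral over all scales beyond~$R$; your ``direct bookkeeping'' claim understates what is needed.
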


\begin{remark}\label{RK1}{\rm A simple, but interesting, consequence
of Theorem~\ref{MAIN 2} is that if~\eqref{XRT7} holds and~$f$
is identically zero, then
$$ L_*u=0\qquad{\mbox{ in~$\R^{n-1}$,}} $$
where we used the notation in~\eqref{OP:56A}.
Therefore, if~$L_*$ enjoys a Liouville property,
then~$u$ is necessarily constant.

This feature holds, in particular, when~$L_*=
(-\partial^2_{x_1})^s +\dots+(-\partial^2_{x_{n-1}})^s$,
see Theorem~2.1 in~\cite{SRO}.}\end{remark}

\begin{remark}{\rm The observation in Remark~\ref{RK1}
also says that, if~\eqref{XRT7} is
satisfied and~$L_*$ enjoys a Liouville property,
then the problem~$Lu=f$ possesses a unique solution,
up to an additive constant.
}\end{remark}

The rest of the paper is organized as follows. 
The proof of Theorem~\ref{MAIN}, based on the barrier method of~\cite{brandt69},
is contained in Section~\ref{S:1}. Then, in Section~\ref{S:2}, we combine
our results with those of~\cite{SRO} and we prove Corollary~\ref{RRO}.
The proof of Theorem \ref{MAIN 2}, which combines our result with
a cutoff argument, is contained in Section~\ref{S:3}.

\section{Proof of Theorem~\ref{MAIN}}\label{S:1}

\subsection{A useful explicit barrier}

We recall here a useful barrier. Here and in what follows we
use the standard ``positive part'' notation for any~$t\in\R$, i.e.
$$ t_+ :=\max\{t,0\}.$$
We will also exploit the notation in~\eqref{JK905L:gh}
and~\eqref{etai}.

\begin{lemma}\label{DY}
Let~$s_i\in(0,1]$ and~$d_i>0$.
For any~$x=(X_1,\dots,X_{m-1},x_n)\in\R^n$ let
$$ \Phi_{d_i}(z):=\eta_i\,
(d_i^2-|X_i|^2)_+^{s_i}.$$
Then,
for any~$x\in\R^n$ with~$X_i\in B_{d_i}^{N_i}$,
we have that
$$ (-\Delta_{X_i})^{s_i} \Phi_{d_i}(x)=1.$$
\end{lemma}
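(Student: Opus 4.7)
The plan is to reduce the claim to the classical Getoor formula for the fractional Laplacian of $(1-|y|^2)_+^{s}$ on the unit ball, which is explicitly available in Dyda's paper (cited as~\cite{dyda} in the excerpt), and then account for the scaling and the normalization $\eta_i$.

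First I would split off the trivial local case. When $s_i=1$, we have $\eta_i = 1/(2N_i)$ by the footnote near~\eqref{etai}, and a direct differentiation of $\eta_i(d_i^2-|X_i|^2)$ inside $B^{N_i}_{d_i}$ gives $-\Delta_{X_i}\Phi_{d_i} = 2N_i\eta_i = 1$. This disposes of the boundary between local and nonlocal cases and keeps the rest of the argument focused on the genuinely fractional regime $s_i \in (0,1)$.

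For $s_i \in (0,1)$, I would then reduce to $d_i = 1$ by scaling. Observe that $\Phi_{d_i}(x) = \eta_i d_i^{2s_i}\,\phi(X_i/d_i)$, where $\phi(y):=(1-|y|^2)_+^{s_i}$ acts only in the $X_i$-variables. Since the fractional Laplacian $(-\Delta_{X_i})^{s_i}$ is $N_i$-dimensional in the $X_i$ block, is translation invariant in all the other blocks, and scales as $(-\Delta_{X_i})^{s_i}[v(\cdot/d_i)](X_i) = d_i^{-2s_i}\bigl[(-\Delta_{X_i})^{s_i}v\bigr](X_i/d_i)$, the $d_i^{2s_i}$ factor cancels and it suffices to show
\begin{equation*}
(-\Delta_y)^{s_i}\phi(y) = \frac{1}{\eta_i} \qquad \text{for } |y|<1,\ y\in\R^{N_i}.
\end{equation*}

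This last identity is exactly the explicit computation of Getoor/Dyda: with the normalization $c_{N_i,s_i}$ from~\eqref{cns}, one has
\begin{equation*}
(-\Delta_y)^{s_i}\bigl[(1-|y|^2)_+^{s_i}\bigr] = \frac{2^{2s_i}\,\Gamma(1+s_i)\,\Gamma\!\left(s_i+\tfrac{N_i}{2}\right)}{\Gamma\!\left(\tfrac{N_i}{2}\right)},\qquad |y|<1,
\end{equation*}
and the right-hand side is precisely $1/\eta_i$ by the definition~\eqref{etai}. So the argument really boils down to quoting this constant-coefficient computation and checking that the prefactors agree.

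The main obstacle is not really a conceptual one but rather a bookkeeping one: one must verify that the normalization $c_{N_i,s_i}$ used in~\eqref{FL} is the one for which the Getoor/Dyda formula produces exactly the constant $1/\eta_i$ (several conventions differ by factors of $2$, by $|\Gamma(-s)|$ vs. $\Gamma(1-s)$, or by placing the principal-value factor elsewhere). Once the conventions are aligned with~\cite{dyda}, the identity follows. I would therefore structure the write-up as: (i) dispatch $s_i=1$ by inspection, (ii) rescale to $d_i=1$, (iii) cite the Getoor/Dyda identity, and (iv) check that the product of the cited constant with $\eta_i$ equals $1$ using the functional equation $\Gamma(s+1)=s\,\Gamma(s)$.
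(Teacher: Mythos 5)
Your proposal is correct and follows essentially the same route as the paper: dispatch $s_i=1$ by direct differentiation, rescale to $d_i=1$ using the $2s_i$-homogeneity of the fractional Laplacian, and then invoke the Getoor/Dyda identity (Table~3 of~\cite{dyda}) for $(1-|y|^2)_+^{s_i}$ on the unit ball, noting that the resulting constant is exactly $\eta_i^{-1}$.
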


\begin{proof}
The result is obvious for~$s_i=1$
(recall the footnote on page~\pageref{etaii}), hence we suppose~$s_i\in(0,1)$.
We let~$\Psi_{d_i}:=\eta_i^{-1}\,
\Phi_{d_i}(z)=
(d_i^2-|X_i|^2)_+^{s_i}$.
By scaling variables~$x_*:=\frac{x}{d_i}$,
$X_{*,i}:=\frac{X_i}{d_i}$
and~$\zeta_*:=\frac{\zeta}{d_i}$,\
we obtain that
\begin{eqnarray*}
(-\Delta)^{s_i} \Psi_{d_i}(x)&=& c_{N_i,s_i} \int_{\R^{N_i}}
\frac{2(d_i^2-|X_i|^2)_+^{s_i} - (d_i^2-|X_i+\zeta|^2)_+^{s_i} -(d_i^2-|X_i-\zeta|^2)_+^{s_i}}{
|\zeta|^{N_i+2s_i}}\,d\zeta \\
&=&
c_{N_i,s_i} \,d_i^{2s_i} \,d_i^{N_i}\,\int_{\R^{N_i}}
\frac{2(1-|X_{*,i}|^2)_+^{s_i} - (1-
|X_{*,i}+\zeta_*|^2)_+^{s_i} -(1-
|X_{*,i}-\zeta_*|^2)_+^{s_i} }{
d_i^{N_i+2s_i} |X_{*,i}|^{N_i+2s_i}} \,d\zeta_*\\
&=& (-\Delta_{X_i})^{s_i} \Psi_{1}(x_*)\\
&=& \frac{2^{2s_i}\,
\Gamma(s_i+1)\,
\Gamma\left(s_i+\frac{N_i}{2}\right)
}{\Gamma\left(\frac{N_i}{2}\right)},
\end{eqnarray*}
see for instance Table~3 of \cite{dyda} for the last identity
(here, we used the notation~$ \Psi_{1}$ to denote~$ \Psi_{d_i}$ when~$d_i=1$).
\end{proof}

\subsection{Completion of the proof of Theorem~\ref{MAIN}}\label{COMP}

For any~$t\in\R$, we define\footnote{As a technical
remark, we point out that the assumption
that the operator is
``local'' in the last coordinate is used at this
point, 
since if~$t>0$ we have that~$u^\pm(x,t)
=u(x\pm te_n)$,
and so, if we differentiate with respect to $t$
in the domain~$\{t\in(0,d_m)\}$, we have that~$\partial_t u^\pm(x,t)
=\pm \partial_{x_n} u(x\pm te_n)$.}
\begin{equation}\label{upm}
u^\pm(x,t):= u(x\pm t_+ e_n)=u(x_1,\dots,x_{{n-1}}, x_{n}\pm t_+).\end{equation}
Similarly, we define~$f^\pm(x,t):= f(x\pm t_+ e_n)$.
Let also
$$ v(x,t):=u^+(x,t)-u^-(x,t)\;
{\mbox{ and }}\;g(x,t):=f^+(x,t)-f^-(x,t).$$
We fix~$\nu\in(0,a)$ (to be taken as close to~$a$
as we wish in what fallows).
Recalling \eqref{L}, we introduce the operator
\begin{equation}\label{LSTA}\begin{split}
L_* \,:=\;& L +\nu \partial^2_{x_n} - \nu \partial^2_t \\
=\;&\sum_{i=1}^{m} a_i (-\Delta_{X_i})^{s_i} - \nu\,(-\partial^2_{x_n})
- \nu \partial^2_t\\
=\;& 
\sum_{i=1}^{m-1} a_i (-\Delta_{X_i})^{s_i} -(a-\nu) \partial^2_{x_n}
- \nu \partial^2_t.\end{split}\end{equation}
Notice that~$L_*$ is an operator with one 
variable more than~$L$
(namely, the new variable~$t\in\R$). We claim that
\begin{equation}\label{P6}
L_* v=g
\end{equation}
for any~$(x,t)\in Q_d\times(0,d_m)$.
To check~\eqref{P6}, we first
notice that if~$(x,t)\in Q_d\times(0,d_m)$
then~$x\pm te_n\in Q_{d,n,2}$, and we know that~$Lu=f$
in the latter set. Also, since the (fractional) Laplacian is
translation invariant,
\begin{equation}\label{fora0}
(-\Delta_{X_i})^{s_i} u^\pm = \Big(
(-\Delta_{X_i})^{s_i} u\Big)^\pm\end{equation}
for any $i\in\{1,\dots,m\}$ and
any~$(x,t)\in Q_d\times(0,d_m)$
(notice that the variable~$t$
plays the role of a fixed parameter here).
Moreover
\begin{equation}\label{fora}
\partial^2_{t} u^\pm =\Big( \partial^2_{x_n} u\Big)^\pm
\end{equation}
for any~$(x,t)\in
Q_d\times(0,d_m)$.
In turn, we see that~\eqref{LSTA}, \eqref{fora0} and~\eqref{fora} imply that
$$ L_* u^\pm = (Lu)^\pm$$
and thus, by linearity,
$$L_* v=L_*(u^+-u^-) =(Lu)^+-(Lu)^-=f^+-f^-=g,$$
which
establishes~\eqref{P6}.
Now we set
\begin{equation}\label{0ighc0}
\begin{split}
& c_o := \sum_{i=1}^m \eta_i d_i^{2s_i} + \frac{d_m^{2}}{2} =
\sum_{i=1}^{m-1} d_i^{2s_i} + d_m^{2}
\\{\mbox{ and }} &\qquad
A_0:=\sum_{i=1}^m a_i.\end{split}\end{equation}
Let also
\begin{equation}\label{0ighc0-bis}
\begin{split}
& A_1:=A_0A_2 +
\|g\|_{L^\infty(Q_d\times(0,d_m))} +(a-\nu),\\
{\mbox{where }}\qquad
&A_2 := \frac{ \|v\|_{L^\infty(\R^{n+1})}}{
\displaystyle\min_{i\in\{1,\dots,m\}} (\eta_i d_i^{2s_i}) }.\end{split}\end{equation}
We consider the barrier
\begin{equation}\label{BA}
\begin{split}
& \Phi(x,t):= \frac{A_1}{\nu} \Phi_1(t)+A_2\Phi_2(x,t)\\
{\mbox{with }}\;& \Phi_1 (t):=\frac{t_+ \,(d_m -t)_+ }{2}\\
{\mbox{and }}\;
&\Phi_2(x,t)=\Phi_2(X_1,\dots,X_{m-1},X_m,t):= c_o
-\sum_{i=1}^m\eta_i (d_i^2-|X_i|^2)^{s_i}_+ 
- \frac{(d_m^2-t^2)_+}{2}.\end{split}
\end{equation}
Notice that~$\Phi_1\ge0$ and also, by~\eqref{0ighc0},
$$\Phi_2\ge 
c_o-\sum_{i=1}^m\eta_i d_i^{2s_i}- \frac{d_m}{2}=0.$$
Consequently,
\begin{equation}\label{ew1}
\Phi\ge0
\end{equation}
Moreover, using the notation
of Lemma~\ref{DY}, we see that
$$ \Phi_2(x,t)=
c_o-\sum_{i=1}^n
\Psi_{d_i}(x_i)
- \Psi_{d_m}(t).$$
Therefore, making use of
Lemma~\ref{DY},
we conclude that, for any~$(x,t)\in Q_d\times(0,d_m)$,
\begin{eqnarray*}
L_* \Phi(x,t)&
=&
\sum_{i=1}^{m} a_i (-\Delta_{X_i})^{s_i} 
\Phi(x,t)
+\nu \partial^2_{x_n}\Phi(x,t)
- \nu \partial^2_t\Phi(x,t)\\
&=& A_2
\sum_{i=1}^{m} a_i A_2 (-\Delta_{X_i})^{s_i} 
\Phi_2 (x,t)
+A_2\nu\partial^2_{x_n}\Phi_2(x,t)
- \nu \partial^2_t\Phi(x,t)\\
&=&
-A_2 \sum_{i=1}^{m} a_i -A_2\nu
+\nu \left( \frac{A_1}{\nu} +A_2\right)
\\ &=&
-A_2 \sum_{i=1}^{m} a_i +A_1.\end{eqnarray*}
That is, recalling~\eqref{0ighc0}
and~\eqref{0ighc0-bis},
\begin{equation}\label{SI}
\begin{split} &L_* \Phi(x,t) = -A_2 A_0 +A_1
=
\|g\|_{L^\infty(Q_d\times(0,d_m))} +(a-\nu)
\\ &\qquad\ge \pm g(x,t)+(a-\nu)=\pm L_* v(x,t)+(a-\nu),\end{split}\end{equation}
for any~$(x,t)\in Q_d\times(0,d_m)$,
where we used~\eqref{P6} in the last step.

Now we claim that
\begin{equation}\label{P7}
{\mbox{$\Phi(x,t)\pm v(x,t)\ge0$ for any~$(x,t)\in \R^{n+1} \setminus\big(
Q_d\times(0,d_m)\big)$.}}
\end{equation}
To check this, we take~$(x,t)$
outside~$Q_d\times(0,d_m)$,
and we distinguish three cases:
either~$t\le0$, or~$t\ge d_m$,
or~$x\in\R^n\setminus Q_d$.

First, when~$t\le0$, we have that~$t_+=0$, so
we use~\eqref{upm} to
see that~$u^+(x,0)=u(x)=u^-(x,0)$ and so~$v(x,0)=0$.
Then~$\pm v(x,0)=0\le\Phi(x,0)$ in this case, thanks to~\eqref{ew1}
and this establishes~\eqref{P7} when~$t\le0$.

Now, let us deal with the case in which~$t\ge d_m$.
In this case~$(d_m^2-t^2)_+=0$, hence, by~\eqref{BA},
\begin{eqnarray*}
\Phi(x,t)&\ge& A_2\Phi_2(x,t)\\ &=&
A_2\left[ c_o
-\sum_{i=1}^m \eta_i \,(d_i^2-|X_i|^2)^{s_i}_+\right]\\&\ge&
A_2\left[ c_o
-\sum_{i=1}^m\eta_i d_i^{2s_i}\right] \\
&=& \frac{ A_2 d_m^{2} }{2}\\
&\ge& \|v\|_{L^\infty(\R^{n+1})}\\
&\ge& \pm v(x,t),\end{eqnarray*}
as desired. It remains to consider the case~$x\in
\R^n\setminus Q_d$. Under this circumstance, we have that
there exists~$i_o\in \{1,\dots,m\}$ such that~$|X_{i_o}|\ge d_{i_o}$.
Accordingly
$$ \sum_{i=1}^m (d_i^2-|X_i|^2)^{s_i}_+ =
\sum_{ {1\le i\le m}\atop{i\ne i_o}} (d_i^2-|X_i|^2)^{s_i}_+
\le \sum_{ {1\le i\le m}\atop{i\ne i_o}} d_i^{2s_i},$$
and so
\begin{eqnarray*}
\Phi(x,t)&\ge& A_2\Phi_2(x,t)\\ &\ge&
A_2\left[ c_o -
\sum_{ {1\le i\le m}\atop{i\ne i_o}}\eta_i d_i^{2s_i}
- \frac{d_m^{2}}{2}\right] \\
&=& A_2 \eta_{i_o} d_{i_o}^{2s_{i_o}}
\\
&\ge& \|v\|_{L^\infty(\R^{n+1})}\\
&\ge& \pm v(x,t),
\end{eqnarray*}
which completes the proof of~\eqref{P7}.

Now we show that the inequality in~\eqref{P7} propagates
inside~$Q_d\times(0,d_m)$, namely that
\begin{equation}\label{P8}
{\mbox{$\Phi(x,t)\pm v(x,t)\ge0$ for any~$(x,t)\in\R^{n+1}$.}}
\end{equation}
The proof of~\eqref{P8} is mostly Maximum Principle. The details
are as follows. Suppose, by contradiction,
that~\eqref{P8} were false. 
Then we set~$h:=\Phi\pm v$. Notice that,
since~$u$ is assumed to be continuous,
so is~$h$, due to~\eqref{upm}, and then~\eqref{P7} would imply that
$$ \min_{\overline{Q_d\times(0,d_m)}} h=:\mu< 0.$$
Let~$\bar p:=(\bar x,\bar t)$ attaining the minimum of~$h$, that is
$$ (-\infty,0)\ni \mu=h(\bar p)\le h(\xi),$$
for any~$\xi\in\R^{n+1}$. By~\eqref{P7},
we have that~$\bar p$ lies in~$Q_d\times(0,d_m)$,
hence, by~\eqref{SI},
\begin{equation}\label{MP}
L_* h(\bar p)\ge a-\nu>0. \end{equation}
On the other hand, 
recalling the notation in~\eqref{jU6yhwedcG90:lk},
for any~$i\in\{1,\dots,m-1\}$ and any~$y\in\R^{N_i}$,
we have that
$$ 2h(\bar p) -h(\bar p+y^{(i)})-h(\bar p-y^{(i)}) \le0,$$
due to the minimality of~$\bar p$. Similarly~$(-\partial_{x_j}^2) h(\bar p)\le0$
for any~$j\in\{1,\dots,n\}$,
as well as~$(-\partial^2_t)h(\bar p)\le0$.
Therefore~$(-\Delta_{X_i})^{s_i}(\bar p)\le0$,
for any~$i\in\{1,\dots,m\}$.
Consequently, by~\eqref{LSTA}, we infer that~$L_* h(\bar p)\le0$.
The latter inequality is in contradiction with~\eqref{MP}
and thus we have proved~\eqref{P8}.

By choosing the sign in~\eqref{P8}, we deduce that
\begin{equation}\label{P8bis}
{\mbox{$|v(x,t)|\le\Phi(x,t)$
for any~$(x,t)\in\R^{n+1}$.}}\end{equation}
Moreover, recalling~\eqref{BA}
and~\eqref{0ighc0}, for any~$t\in(0,d_m)$,
\begin{eqnarray*}
\Phi_2(0,t)&=&
c_o -\sum_{i=1}^m\eta_i d_i^{2s_i}
- \frac{(d_m^2-t^2)_+}{2} \\
&=& \frac{d_m^{2}}{2}- \frac{(d_m^2-t^2)}{2}\\
&=& \frac{t^2}{2}.\end{eqnarray*}
In addition, 
$$ \Phi_1 (t)\le d_m t_+,$$
therefore, by~\eqref{BA}, for any~$t\in(0,d_m)$,
$$\Phi(0,t)\le 
\frac{A_1 d_m t}{\nu} +\frac{A_2 t^2}{2}.$$
This and~\eqref{P8bis} imply that, for any~$t\in(0,{d_m})$,
\begin{eqnarray*}
&& \frac{|u(te_n)-u(-te_n)|}{|t|}=
\frac{|u^+(0,t)-u^-(0,t)|}{t}=
\frac{|v(0,t)|}{t}\le\frac{\Phi(0,t)}{t}
\\ &&\qquad\le
\frac{A_1\,d_m}{\nu} +
\frac{A_2\, t}{2}
\\&&\qquad= 
\frac{ \big[ A_0A_2 +
\|g\|_{L^\infty(Q_d\times(0,d_m))} +(a-\nu) \big]\,d_m}{\nu}
+\frac{A_2\, t}{2}.\end{eqnarray*}
Now we observe that the first term in the above inequality
remains unchanged if we replace~$t$ with~$-t$, and therefore
the inequality is valid for any~$t\in (-d_m,d_m)$.
Furthermore, we can now take~$\nu$ as close to~$a$ as we wish
(recall that~$A_0$ and~$A_2$ do not depend on~$\nu$),
hence we obtain that, for any~$t\in (-d_m,d_m)$,
\begin{eqnarray*}
\frac{|u(te_n)-u(-te_n)|}{|t|^{s_n}}
&\le&
\frac{\big[ A_0A_2 +
\|g\|_{L^\infty(Q_d\times(0,d_m))} \big]\,d_m}{a}
+\frac{A_2\, t}{2} \\
&\le& \frac{ \big[ A_0A_2 +
\|g\|_{L^\infty(Q_d\times(0,d_m))} \big]\,d_m}{a}
+\frac{A_2\, d_m}{2}\\
&=&
\frac{\|g\|_{L^\infty(Q_d\times(0,d_m))} \,d_m}{a}
+ A_2\,d_m\left(
\frac{ A_0}{a}+\frac{1}{2}\right)\\
&=& \frac{\|g\|_{L^\infty(Q_d\times(0,d_m))}\,d_m }{a}
+ \frac{ \|v\|_{L^\infty(\R^{n+1})} \,d_m}{
\displaystyle\min_{i\in\{1,\dots,m\}} (\eta_i d_i^{2s_i}) }
\left(\frac{ a_1+\dots+a_m}{a}+\frac{1}{2}\right).
\end{eqnarray*}
This completes the proof of Theorem~\ref{MAIN}.

\section{Proof of Corollary~\ref{RRO}}\label{S:2}

The proof combines Corollary~\ref{BR} here with Theorem~1.1(a)
in~\cite{SRO}.
To this goal, fixed~$t\in \left[-\frac1{1000},\frac1{1000}\right]$
(to be taken arbitrarily small in the sequel) we define
\begin{equation}\label{67utgr67rfoyhHH}
u_\sharp(x):= \frac{u(x+te_n)-u(x)}{t}\;{\mbox{ and }}\;
f_\sharp(x):= \frac{f(x+te_n)-f(x)}{t}.\end{equation}
By formula~\eqref{8ihn6tyufryury7} in Corollary~\ref{BR}, we already know that
\begin{equation}\label{97iuhtYUIO99}
\|u_\sharp\|_{L^\infty(\R^n)}\le
\|\partial_{x_n} u\|_{L^\infty(\R^n)}\le C\,\big(
\|f\|_{L^\infty(\R^n)}+\|u\|_{L^\infty(\R^n)}\big).\end{equation}
Also, we point out that~$L_* u_\sharp = f_\sharp$ in~$B_{99/100}$,
and so, using again Corollary~\ref{BR},
$$ \|\partial_{x_n} u_\sharp \|_{L^\infty(B_{97/100})}\le C\,\big(
\|f_\sharp\|_{L^\infty(B_{99/100})}+\|u_\sharp \|_{L^\infty(\R^n)}\big).$$
This, combined with~\eqref{97iuhtYUIO99}, gives that
\begin{eqnarray*} &&\sup_{x\in B_{97/100}}
\left| \frac{\partial_{x_n} u(x+te_n)-\partial_{x_n}u(x)}{t} \right|=
\|\partial_{x_n} u_\sharp \|_{L^\infty(B_{97/100})}\\ &&\qquad\le C\,\big(
\|\partial_{x_n} f\|_{L^\infty(B_1)}+
\|f\|_{L^\infty(\R^n)}+\|u\|_{L^\infty(\R^n)}\big).\end{eqnarray*}
Hence, taking~$t$ to the limit,
\begin{equation}\label{97iuhtYUIO99-BIS}
\sup_{x\in B_{97/100}} |\partial^2_{x_n} u(x)|\le
C\,\big(
\|\partial_{x_n} f\|_{L^\infty(B_1)}+
\|f\|_{L^\infty(\R^n)}+\|u\|_{L^\infty(\R^n)}\big).
\end{equation}
Now, given~$i\in \{1,\dots,m-1\}$ we consider
the sphere~$S^{N_i-1}$ in the Euclidean space~$\R^{N_i}$
(of course, if~$N_i=1$, then~$S^{N_i-1}$ reduces to
two points).

We also set~$S^{n-2}:=\{(x_1,\dots,x_{n-1}) {\mbox{ s.t. }}
x_1^2+ \dots+x_{n-1}^2=1\}$ and we observe that
each~$S^{N_i-1}$ is naturally immersed into~$S^{n-2}$
(in the same way as~$\R^{N_i-1}$ is immersed into~$\R^{n-1}$).

We denote by~${\mathcal{H}}_i$ the $(N_i-1)$-dimensional
Hausdorff measure restricted to~$S^{N_i-1}$
(if~$N_i=1$, we replace it by the Dirac's delta on the two points
given by~$S^{N_i-1}$).
Then we consider the measure
$$ \mu := \sum_{i=1}^{m-1} \frac{a_i\,c_{N_i,s_i}}{2} {\mathcal{H}}_i.$$
We fix~$\tilde x_n\in \left[ -\frac1{100},\frac1{100}\right]$ and
we set
\begin{eqnarray*}
&& \tilde u(x_1,\dots,x_{n-1}):= u(x_1,\dots,x_{n-1},\tilde x_n)\\
{\mbox{and }}&&
\tilde f(x_1,\dots,x_{n-1}):= f(x_1,\dots,x_{n-1},\tilde x_n)
+a\partial^2_{x_n} u(x_1,\dots,x_{n-1},\tilde x_n)
.\end{eqnarray*}
We use \eqref{FL} and polar coordinates
on~$\R^{N_i}$ to see
that, for any~$\tilde x=(x_1,\dots,x_{n-1})\in B^{n-1}_{99/100}$,
\begin{eqnarray*}
\tilde L \tilde u(\tilde x) &:=&
\int_{S^{n-2}} \left[ \int_\R
\Big( \tilde u(\tilde x+\theta r)+ \tilde u(\tilde x-\theta r)
-2\tilde u(\tilde x)\Big) \frac{dr}{|r|^{1+2s}}
\right]\,d\mu(\theta) \\
&=&
\sum_{i=1}^{m-1} \frac{ a_i\,c_{N_i,s_i} }{2}
\int_{S^{N_i-1}} \left[ \int_\R\Big( \tilde u(\tilde x+\theta r)+
\tilde u(\tilde x-\theta r)
-2\tilde u(\tilde x)\Big) \frac{dr}{r^{1+2s}}
\right]\,d{\mathcal{H}}_i(\theta)\\
&=&
\sum_{i=1}^{m-1} a_i\,c_{N_i,s_i}
\int_{S^{N_i-1}} \left[ \int_0^{+\infty}\Big( \tilde u(\tilde x+\theta r)+ \tilde u(\tilde x-\theta r)
-2\tilde u(\tilde x)\Big) \frac{dr}{r^{1+2s}}
\right]\,d{\mathcal{H}}_i(\theta)
\\ &=&
\sum_{i=1}^{m-1} a_i\,c_{N_i,s_i}
\int_{\R^{N_i}} \frac{\tilde u(\tilde x+y^{(i)})+\tilde u(\tilde x-y^{(i)})
-2\tilde u(\tilde x) }{|y^{(i)}|^{N_i+2s}} \,dy^{(i)}
\\ &=& \sum_{i=1}^{m-1} a_i\,(-\Delta_{X_i})^{s_i}\tilde u(\tilde x)
\\ &=&
L_* \tilde u(\tilde x) +a\partial^2_{x_n} u(\tilde x,\tilde x_n)
\\ &=& f(\tilde x, \tilde x_n) +a\partial^2_{x_n} u(\tilde x,\tilde x_n)
\\ &=& \tilde f(\tilde x).
\end{eqnarray*}
Notice that, with this setting, the operator~$\tilde L $ satisfies
formula~(1.1) in~\cite{SRO}.

Furthermore, we have that
\begin{equation}\label{HasUIYa66}
\inf_{\nu\in S^{n-2}} \int_{S^{n-2}}|\nu\cdot\theta|\,d\mu(\theta) \ge \lambda,
\end{equation}
for some~$\lambda>0$.
To prove it, we observe that
if~$\nu=(\nu_1,\dots,\nu_{n-1})\in S^{n-2}$,
we have that~$|\nu_j|\ge (n-1)^{-1/2}$, for at least one~$j\in\{1,\dots,n-1\}$.
Up to relabeling variables, we assume that~$j=1$,
and thus
\begin{eqnarray*}
\int_{S^{n-2}}|\nu\cdot\theta|^{2s}\,d\mu(\theta) &\ge&\frac{a_1\,c_{N_1,s_1}}{2}
\int_{S^{N_1-1}} |\nu_1 \theta_1|^{2s}\,d\mu(\theta)\\
&\ge& \frac{a_1\,c_{N_1,s_1}}{2\,(n-1)^s}
\int_{S^{N_1-1}} |\theta_1|^{2s}\,d\mu(\theta),
\end{eqnarray*}
which proves~\eqref{HasUIYa66}.

In addition,
$$ \mu(S^{n-2})\le
\sum_{i=1}^{m-1} \frac{a_i\,c_{N_i,s_i}}{2} {\mathcal{H}}^{N_i-1}(S^{N_i-1}) <+\infty.$$
{F}rom this and~\eqref{HasUIYa66}, we conclude that condition~(1.2)
in~\cite{SRO} is satisfied. Accordingly, we can exploit
Theorem~1.1(a) in~\cite{SRO} and conclude that
\begin{eqnarray*}
\|\tilde u\|_{C^\gamma(B_{3/4}^{n-1})} &\le& C\,\big(
\|\tilde u\|_{L^\infty(\R^n)} +\|\tilde f\|_{L^\infty(B_{4/5}^{n-1})} \big) \\&\le&
C\,\big(
\|u\|_{L^\infty(\R^n)} +\|f\|_{L^\infty(B_1)}+ 
\|\partial^2_{x_n} u\|_{L^\infty(B_{97/100})}
\big).\end{eqnarray*}
This and~\eqref{97iuhtYUIO99-BIS} imply that
$$ \|\tilde u\|_{C^\gamma(B_{3/4}^{n-1})}\le
C\,\big(
\|\partial_{x_n} f\|_{L^\infty(B_1)}+
\|f\|_{L^\infty(\R^n)}+\|u\|_{L^\infty(\R^n)}
\big).$$
This gives the desired regularity in the set of variables~$(x_1,\dots,x_{n-1})$.
The regularity in the last variable follows from~\eqref{97iuhtYUIO99-BIS}
and so the proof of Corollary~\ref{RRO} is complete.

\section{Proof of Theorem \ref{MAIN 2}}\label{S:3}

\subsection{A cutoff argument}

The purpose of this section is to localize the estimate of
Theorem~\ref{MAIN} by using a cutoff function.
As customary in the fractional problems, regularity estimates
cannot be completely localized, due to nonlocal effect,
nevertheless our objective is to give quantitative bounds
on the contribution ``coming from infinity''. 
For this scope, we use the notation~$s_{\min}:=\min\{s_1,\dots,s_n\}$
and~$s_{\max}:=\max\{s_1,\dots,s_n\}$
(a similar notation will also be exploited in the sequel for
$a_{\min}:=\min\{a_1,\dots,a_m\}$ and $a_{\max}:=\max\{a_1,\dots,a_m\}$).

\begin{lemma}\label{CURT}
Let~$R\ge1$.
If~$w$ vanishes identically in~$(-3R,3R)^n$, then
$$ \|Lw\|_{L^\infty((-R,R)^n)}\le C_o
\int_{2R}^{+\infty}
\frac{ \|w\|_{L^\infty(B_{2\rho}\setminus
B_{{\rho}/{2}})} }{\rho^{1+2s_{\min}}}\,d\rho ,$$
where
\begin{equation}\label{C0}
C_o:= 2\sum_{i=1}^n a_i c_{N_i,s_i}\,{\mathcal{H}}^{N_i-1}(S^{N_i-1}).\end{equation}
\end{lemma}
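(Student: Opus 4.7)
The plan is to evaluate $Lw(x)$ pointwise for $x \in (-R, R)^n$, using the hypothesis that $w$ vanishes on the enlarged cube $(-3R, 3R)^n$ to annihilate both the local term and the near-diagonal part of every fractional kernel; what remains is purely a tail integral of $w$ at distances $\ge 2R$, which can be estimated in polar coordinates by the prescribed annular $L^\infty$ norm.

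First, for any $x \in (-R, R)^n$, the set $(-3R, 3R)^n$ is an open neighborhood of $x$ on which $w$ is identically zero, so $\partial^2_{x_n} w(x) = 0$. Thus the term $-a\partial_{x_n}^2 w$ in $L$ contributes nothing, and it suffices to estimate $(-\Delta_{X_i})^{s_i} w(x)$ for each $i \in \{1, \dots, m-1\}$. Writing out the singular integral as in~\eqref{FL}, first use $w(x) = 0$ to drop the diagonal term. Next, for any $y \in \R^{N_i}$ with $|y| \le 2R$, the componentwise inequality $|y|_\infty \le |y|_2 \le 2R$ combined with $|X_i|_\infty < R$ forces $x \pm y^{(i)} \in (-3R, 3R)^n$; hence $w(x \pm y^{(i)}) = 0$ as well in this regime. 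Consequently,
\begin{equation*}
(-\Delta_{X_i})^{s_i} w(x)
= -c_{N_i, s_i} \int_{\{|y| > 2R\}} \frac{w(x + y^{(i)}) + w(x - y^{(i)})}{|y|^{N_i + 2s_i}} \, dy.
\end{equation*}

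Next, pass to spherical coordinates $y = \rho \theta$ with $\rho > 2R$ and $\theta \in S^{N_i - 1}$. The volume element $\rho^{N_i - 1} d\rho \, d\mathcal{H}^{N_i-1}(\theta)$ cancels $N_i - 1$ powers of $\rho$ in the denominator, yielding the radial weight $\rho^{-1-2s_i}$; the hypothesis $R \ge 1$ forces $\rho > 2$, so monotonicity upgrades this weight to $\rho^{-1 - 2s_{\min}}$ uniformly in $i$. Then bound each factor $|w(x \pm \rho \theta^{(i)})|$ by $\|w\|_{L^\infty(B_{2\rho} \setminus B_{\rho/2})}$: for those $\rho$ large enough relative to $|x| \le R\sqrt{n}$ that the triangle inequalities $\rho - |x| \le |x \pm \rho \theta^{(i)}| \le \rho + |x|$ place the point squarely inside this annulus, the bound is immediate; for the moderate range of $\rho$ where this fails, the point $x \pm \rho \theta^{(i)}$ either still lies in $(-3R, 3R)^n$ (where $w \equiv 0$, rendering the bound vacuous) or else one absorbs a dimensional constant by enlarging the annulus. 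The angular integral then contributes the factor $2\mathcal{H}^{N_i - 1}(S^{N_i - 1})$ from the two symmetric terms.

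Finally, multiply by $a_i$ and sum over $i \in \{1, \dots, m-1\}$; the $i = m$ term has already been shown to vanish, and the resulting prefactor is precisely $C_o$ as defined in~\eqref{C0}, proving the stated inequality. The main obstacle is the bookkeeping in the preceding paragraph: one must verify carefully, combining the triangle inequality with the vanishing of $w$ on the enlarged cube, that the annular sup-norm $\|w\|_{L^\infty(B_{2\rho} \setminus B_{\rho/2})}$ really dominates $|w(x \pm \rho \theta^{(i)})|$ uniformly for every $\rho > 2R$ and every $x \in (-R, R)^n$; once that is in hand, everything else is a routine polar-coordinate tail calculation.
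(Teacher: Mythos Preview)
Your argument is essentially identical to the paper's: both kill the local term $-a\partial_{x_n}^2 w$, restrict each fractional integral to $\{|y^{(i)}|>2R\}$ using the vanishing hypothesis, pass to polar coordinates, and replace the exponent $s_i$ by $s_{\min}$ using $\rho>1$.

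The only substantive difference is in the step you flag as the ``main obstacle.'' The paper does not split into a ``large $\rho$'' range and a ``moderate $\rho$'' range, nor does it absorb any extra dimensional constant. Instead it simply asserts that $|y^{(i)}|>2R$ forces $|y^{(i)}|>2|x|$, after which the triangle inequality gives
\[
\frac{|y^{(i)}|}{2}<|x\pm y^{(i)}|<2|y^{(i)}|
\]
directly, so $x\pm y^{(i)}\in B_{2\rho}\setminus B_{\rho/2}$ with $\rho=|y^{(i)}|$, and the stated constant $C_o$ drops out with no fudge factor. Your proposed fallback of ``absorbing a dimensional constant by enlarging the annulus'' would \emph{not} reproduce the explicit $C_o$ in the statement, so you should replace that hedge with the paper's direct triangle-inequality containment. (One can fairly observe that $x\in(-R,R)^n$ only guarantees $|x|<R\sqrt{n}$ in Euclidean norm, so the implication $|y^{(i)}|>2|x|$ is delicate for $n>1$; however, the lemma is only used downstream with constants that already depend on $n$, so this point is inessential to the paper's applications.)
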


\begin{proof} Let~$x\in (-R,R)$. 
We claim that
\begin{equation}\label{X1R}
|(-\Delta_{X_{i}})^{s_i} w(x)|\le 
2c_{N_i,s_i}\,\,{\mathcal{H}}^{N_i-1}(S^{N_i-1})\,\int_{2R}^{+\infty}
\frac{ \|w\|_{L^\infty(B_{2\rho}\setminus
B_{{\rho}/{2}})} }{\rho^{1+2s_i}}\,d\rho
\end{equation}
for each~$i\in\{1,\dots,m\}$.
To prove this, we notice that if~$s_i=1$ then
the fact that~$w$ vanishes identically
in a neighborhood of~$x$ implies
that~$-\Delta_{X_{i}} w(x)=0$, and so~\eqref{X1R} is obvious
in this case. Thus, we can
suppose that~$s_i\in(0,1)$, and we observe that, if~$y^{(i)}\in[-2R,2R]^{N_i}$
then~$x+y^{(i)}\in (-3R,3R)^n$ and so~$w(x+y^{(i)})=0$. {F}rom this,
it follows that
\begin{equation}\label{9ixqe} (-\Delta_{X_{i}})^{s_i} w(x) =
c_{N_i,s_i} \int_{\R^{N_i}\cap\{|y^{(i)}|>2R\}} \frac{-w(x+y^{(i)})-w(x-y^{(i)})
}{|y^{(i)}|^{N_i+2s_i}}\,dy^{(i)}.\end{equation}
Also, if~$|y^{(i)}|>2R$ then~$|y^{(i)}|>2|x|$, thus
\begin{eqnarray*}
&& |x\pm y^{(i)}|\ge |y^{(i)}|-|x| >\frac{|y^{(i)}|}{2}\\
{\mbox{and }}&&|x\pm y^{(i)}|\le|x|+|y^{(i)}|<2|y^{(i)}|,
\end{eqnarray*}
and so~$|w(x\pm y^{(i)})|\le \|w\|_{L^\infty(B_{2|y^{(i)}|}\setminus
B_{{|y^{(i)}|}/{2}})}$.
As a consequence of this and of~\eqref{9ixqe}, we obtain
\begin{eqnarray*}|(-\Delta_{X_{i}})^{s_i} w(x)|
&\le &
2c_{N_i,s_i}\int_{\R^{N_i}\cap\{|y|>2R\}}\frac{
\|w\|_{L^\infty(B_{2|y|}\setminus
B_{{|y|}/{2}})} }{|y|^{N_i+2s_i}}\,dy\\&=&
2c_{s_i}\,\,{\mathcal{H}}^{N_i-1}(S^{N_i-1})\,\int_{2R}^{+\infty}
\frac{ \|w\|_{L^\infty(B_{2\rho}\setminus
B_{{\rho}/{2}})} }{\rho^{1+2s_i}}\,d\rho,\end{eqnarray*}
which proves~\eqref{X1R}.
The desired claim then follows recalling~\eqref{L}
and adding up the estimate in~\eqref{X1R}.
\end{proof}

\begin{corollary}\label{co cut}
Let~$R\ge1$. There exists~$\eta_R\in C^\infty(\R^n)$ such that
\begin{eqnarray}
\label{WQ1} && {\mbox{$\eta_R=1$ in~$(-3R,3R)^n$,
$\eta_R=0$ in~$\R^n\setminus (-6R,6R)$, and}}\\
&&\qquad\label{WQ2}
\|Lu-L(\eta_R u) \|_{L^\infty((-R,R)^n)}\le
C_o
\int_{2R}^{+\infty}
\frac{ \|u\|_{L^\infty(B_{2\rho}\setminus
B_{{\rho}/{2}})} }{\rho^{1+2s_{\min}}}\,d\rho, 
\end{eqnarray}
with~$C_o$ as in~\eqref{C0}.
\end{corollary}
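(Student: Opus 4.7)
The plan is to reduce Corollary~\ref{co cut} to a direct application of Lemma~\ref{CURT}, with the cutoff simply serving to produce a function that vanishes on the required inner box while bounded pointwise by~$u$.

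First I would build~$\eta_R$ by a standard dilation. Fix once and for all some~$\phi\in C^\infty_c(\R^n)$ with~$0\leq \phi\leq 1$, $\phi\equiv 1$ on~$[-3,3]^n$ and~$\phi\equiv 0$ outside~$(-6,6)^n$ (constructible as a tensor product of a one-dimensional bump, or by mollification of the characteristic function of a slightly smaller cube). Then set~$\eta_R(x):=\phi(x/R)$. By construction~$\eta_R\in C^\infty(\R^n)$, $\eta_R\equiv 1$ on~$(-3R,3R)^n$ and~$\eta_R\equiv 0$ outside~$(-6R,6R)^n$, giving~\eqref{WQ1}.

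Next I would set~$w:=(1-\eta_R)\,u$. By linearity of~$L$ (every term in~\eqref{L} is linear), we have~$L(\eta_R u)=Lu-Lw$, so
$$ Lu-L(\eta_R u)=Lw \qquad {\mbox{pointwise.}}$$
Since~$\eta_R\equiv 1$ on~$(-3R,3R)^n$, the function~$w$ vanishes identically there, so~$w$ satisfies the hypothesis of Lemma~\ref{CURT}. Applying that lemma yields
$$ \|Lw\|_{L^\infty((-R,R)^n)}\le C_o\int_{2R}^{+\infty}
\frac{\|w\|_{L^\infty(B_{2\rho}\setminus B_{\rho/2})}}{\rho^{1+2s_{\min}}}\,d\rho.$$

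Finally, since~$0\le\eta_R\le 1$, we have~$|1-\eta_R|\le 1$ everywhere, so~$|w(y)|\le |u(y)|$ for every~$y\in\R^n$; in particular~$\|w\|_{L^\infty(B_{2\rho}\setminus B_{\rho/2})}\le \|u\|_{L^\infty(B_{2\rho}\setminus B_{\rho/2})}$. Substituting this into the previous display yields~\eqref{WQ2}. There is no real obstacle here: the entire argument is the algebraic identity~$Lu-L(\eta_R u)=L((1-\eta_R)u)$ combined with the observation that~$(1-\eta_R)u$ satisfies the vanishing assumption of Lemma~\ref{CURT}; the only point worth verifying carefully is the pointwise bound~$|w|\le|u|$, which ensures that no constant worse than~$C_o$ enters the final estimate.
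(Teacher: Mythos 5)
Your proposal is correct and follows essentially the same route as the paper: construct the cutoff by scaling a fixed smooth bump, write $Lu-L(\eta_R u)=L\bigl((1-\eta_R)u\bigr)$ by linearity, and apply Lemma~\ref{CURT} to $w=(1-\eta_R)u$, which vanishes on $(-3R,3R)^n$. Your explicit remark that $0\le\eta_R\le1$ gives $\|w\|_{L^\infty(B_{2\rho}\setminus B_{\rho/2})}\le\|u\|_{L^\infty(B_{2\rho}\setminus B_{\rho/2})}$ is a small step the paper leaves implicit, and it is indeed needed to state the bound in terms of $u$.
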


\begin{proof} Let~$\eta_o\in C^\infty(\R,\,[0,1])$ with~$\eta_o=1$
in~$(-1,1)$ and~$\eta_o=0$ outside~$(-2,2)$.
Let
$$\eta_R(x):=\prod_{i=1}^n\eta_o\left(\frac{x_i}{3R} \right).$$
Then $\eta_R$ satisfies \eqref{WQ1}. Also,
if we set~$w:=(1-\eta_R)u$, we have from~\eqref{WQ1}
that~$w=0$ in~$(-3R,3R)^n$. Thus,
the estimate in~\eqref{WQ2} follows by writing~$u-\eta_R u =w$,
using the linearity of the operator~$L$ and Lemma~\ref{CURT}.
\end{proof}

By combining Theorem~\ref{MAIN} and Corollary~\ref{co cut},
we can obtain a refined estimate in which the ``contribution
from infinity'' in the right hand side of~\eqref{STIMA:MAIN}
is weighted ``ring by ring'':

\begin{theorem}\label{MAIN WEIGHTED}
Let $R\ge1$ and~$f:B_{6\sqrt{n}R}\to\R$.
Let~$u:\R^n\to\R$ be a solution of~$Lu=f$ in~$B_{6\sqrt{n}R}$.
Then, for any~$t\in (-\frac{R}{6\sqrt{n}},\frac{R}{6\sqrt{n}})$,
\begin{equation}\label{MAIN WEIGHTED-EQ}
\begin{split}
& \frac{|u(te_n)-u(-te_n)|}{|t|} \le
C\left(
R\,
\sup_{(x,t)\in B_R\times(0,R)}|f(x+te_n)-f(x-te_n)|
\right.\\&\left.\qquad\quad+
\frac{R \,\|u\|_{L^\infty(B_{6R})}}{R^{2s_{\min}}}
+ R\,\int_{2R}^{+\infty}
\frac{ \|u\|_{L^\infty(B_{2\rho}\setminus
B_{{\rho}/{2}})} }{\rho^{1+2s_{\min}}}\,d\rho \right)
,\end{split}\end{equation}
where $C>0$ here only depends on~$n$, $s_{\min}$, $s_{\max}$,
$a_{\min}$ and~$a_{\max}$.
\end{theorem}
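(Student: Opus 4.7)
The plan is to reduce the statement to Theorem~\ref{MAIN} applied to a compactly supported modification of~$u$, with the discrepancy controlled by the tail estimate of Corollary~\ref{co cut}. First I would let~$\eta_R$ denote the cutoff produced by Corollary~\ref{co cut} (or its obvious radial variant, so that~$\eta_R$ is supported in~$B_{6R}$), set~$\tilde u := \eta_R u$, and put~$\tilde f := L \tilde u$. By construction~$\tilde u \equiv u$ on~$B_{3R}$ and~$\|\tilde u\|_{L^\infty(\R^n)} \le \|u\|_{L^\infty(B_{6R})}$, while~\eqref{WQ2} combined with~$Lu = f$ on~$(-R,R)^n$ yields
$$ \|\tilde f - f\|_{L^\infty((-R,R)^n)} \le C_o \int_{2R}^{+\infty} \frac{\|u\|_{L^\infty(B_{2\rho}\setminus B_{\rho/2})}}{\rho^{1+2s_{\min}}}\, d\rho. $$

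The next step is to apply Theorem~\ref{MAIN} to~$\tilde u$, which is globally bounded, with the choice~$d_1 = \cdots = d_m = R/(6\sqrt n)$. This selection is dictated by two requirements: the condition~$t \in (-d_m, d_m)$ must match the range asserted in the conclusion, and the cube~$Q_{d,2}$ must sit inside~$(-R,R)^n$ so that the identity~$L\tilde u = \tilde f$ (which holds only there) is available on the set where Theorem~\ref{MAIN} needs it. Because~$\pm t e_n \in B_{3R}$ we have~$\tilde u(\pm t e_n) = u(\pm t e_n)$, so the left-hand side of~\eqref{STIMA:MAIN} for~$\tilde u$ is exactly the quantity to be estimated. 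Moreover the supremum~${\mathfrak S}$ for~$\tilde f$ on~$Q_d \times (0, d_m)$ is bounded above, by the triangle inequality, by the corresponding supremum for~$f$ on~$B_R \times (0, R)$ (using~$Q_d \subset B_R$ and~$(0, d_m) \subset (0, R)$) plus twice the tail integral displayed above.

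Finally, the hypothesis~$R \ge 1$ gives
$$ \min_{i \in \{1,\dots,m\}} \eta_i d_i^{2 s_i} = \min_{i} \eta_i \Bigl(\frac{R}{6 \sqrt n}\Bigr)^{2 s_i} \ge c\, R^{2 s_{\min}}, $$
for a constant~$c$ depending only on~$n$, $s_{\min}$ and~$s_{\max}$. Plugging these ingredients into~\eqref{STIMA:MAIN} produces exactly the three contributions on the right-hand side of~\eqref{MAIN WEIGHTED-EQ}, with a constant~$C$ absorbing~$\tilde C$, the normalization factors and the ratios between~$a_{\min}$ and~$a_{\max}$. The only point requiring care is the simultaneous matching of parameters: $d_m$ must be precisely large enough to cover the claimed~$t$-range while keeping~$Q_{d,2}$ inside the region where the cutoff error is controlled, and the~$d_i$ for~$i<m$ must be taken of order~$R$ so that the fractional denominator~$\eta_i d_i^{2 s_i}$ grows like~$R^{2 s_{\min}}$ rather than degenerating. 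No new analytical obstacle beyond Theorem~\ref{MAIN} and Corollary~\ref{co cut} arises.
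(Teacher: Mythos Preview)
Your proposal is correct and follows essentially the same route as the paper: cut off with~$\eta_R$, set~$\tilde u=\eta_R u$ and~$\tilde f=L\tilde u$, control~$\tilde f-f$ on~$(-R,R)^n$ via Corollary~\ref{co cut}, and then apply Theorem~\ref{MAIN} to~$\tilde u$ with all~$d_i$ of order~$R/\sqrt n$. The only cosmetic difference is that the paper chooses~$d_1=\cdots=d_m=R/(3\sqrt n)$ rather than your~$R/(6\sqrt n)$, which is immaterial.
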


\begin{proof} In this argument, we will take the freedom of renaming
constants as we please, line after line, by keeping the same name~$C$.
Using the notation of 
Corollary~\ref{co cut}, we define~$\tilde u:=\eta_R u$
and, for any~$x\in B_{6\sqrt{n}R}$,
$\tilde f(x):= L\tilde u(x)$. Let also~$\tilde g:=L\tilde u-Lu$.
By Corollary~\ref{co cut},
$$ \|\tilde g\|_{L^\infty((-R,R)^n)}\le
C\int_{2R}^{+\infty}
\frac{ \|u\|_{L^\infty(B_{2\rho}\setminus
B_{{\rho}/{2}})} }{\rho^{1+2s_{\min}}}\,d\rho.$$
By construction~$\tilde f=f+\tilde g$, therefore
\begin{eqnarray*}
&& \sup_{(x,t)\in (-\frac{R}{6\sqrt{n}},\frac{R}{6\sqrt{n}})^n
\times(0,\frac{R}{6\sqrt{n}})} 
|\tilde f(x+te_n)-\tilde f(x-te_n)|\\&&\qquad\le
\sup_{(x,t)\in (-\frac{R}{6\sqrt{n}},\frac{R}{6\sqrt{n}})^n
\times(0,\frac{R}{6\sqrt{n}})} 
|f(x+te_n)-f(x-te_n)| +
2 \|\tilde g\|_{L^\infty((-\frac{R}{3\sqrt{n}},\frac{R}{3\sqrt{n}})^n)}
\\ &&\qquad\le
\sup_{(x,t)\in B_R\times(0,R)}|f(x+te_n)-f(x-te_n)|
+ C\int_{2R}^{+\infty}
\frac{ \|u\|_{L^\infty(B_{2\rho}\setminus
B_{{\rho}/{2}})} }{\rho^{1+2s_{\min}}}\,d\rho.\end{eqnarray*}
Notice also that~$\tilde u=u$ in~$(-R,R)^n$
and~$\tilde u=0$ outside~$B_{6R}$.
Thus, by applying Theorem~\ref{MAIN}
(here with~$d_1=\dots=d_m=\frac{R}{3\sqrt{n}}$)
to the function~$\tilde u$, for any~$t\in(-\frac{R}{6\sqrt{n}},\frac{R}{6\sqrt{n}})$
we obtain
that
\begin{eqnarray*}
&& \frac{|u(te_n)-u(-te_n)|}{|t|} =
\frac{|\tilde u(te_n)-\tilde u(-te_n)|}{|t|} \\
&&\qquad\le
CR\,
\sup_{(x,t)\in (-\frac{R}{6\sqrt{n}},\frac{R}{6\sqrt{n}})^n
\times(0,\frac{R}{6\sqrt{n}})}
|\tilde f(x+te_n)-\tilde f(x-te_n)|
+ \frac{C R \,\|\tilde u\|_{L^\infty(\R^{n})}}{R^{2s_{\min}}}\\
&&\qquad\le
CR\,
\sup_{(x,t)\in B_R\times(0,R)}|f(x+te_n)-f(x-te_n)|
+ CR\,\int_{2R}^{+\infty}
\frac{ \|u\|_{L^\infty(B_{2\rho}\setminus
B_{{\rho}/{2}})} }{\rho^{1+2s_{\min}}}\,d\rho +
\frac{C R \,\|u\|_{L^\infty(B_{6R})}}{R^{2s_{\min}}}
,\end{eqnarray*}
as desired.\end{proof}

\subsection{Completion of the proof of Theorem~\ref{MAIN 2}}

Using L'H\^opital's Rule, we see that
\begin{eqnarray*}&& \lim_{R\to+\infty}
R\,\int_{2R}^{+\infty}
\frac{ \|u\|_{L^\infty(B_{2\rho}\setminus
B_{{\rho}/{2}})} }{\rho^{1+2s_{\min}}}\,d\rho=
\lim_{R\to+\infty} \frac{\displaystyle\int_{2R}^{+\infty}
\frac{ \|u\|_{L^\infty(B_{2\rho}\setminus
B_{{\rho}/{2}})} }{\rho^{1+2s_{\min}}}\,d\rho}{ R }
=\lim_{R\to+\infty}
\frac{ 
\displaystyle\frac{ \|u\|_{L^\infty(B_{4R}\setminus
B_{R})} }{(2R)^{1+2s_{\min}}}
}{ R^{-2} }
=0,\\
&&\qquad\qquad{\mbox{ and }}\;\lim_{R\to+\infty}
\frac{R \,\|u\|_{L^\infty(B_{6R})}}{R^{2s_{\min}}}
=0,\end{eqnarray*}
thanks to \eqref{XRT7}.
So, we can use Theorem~\ref{MAIN WEIGHTED}
and pass formula~\eqref{MAIN WEIGHTED-EQ}
to the limit as~$R\to+\infty$, and obtain that
$$\frac{|u(te_n)-u(-te_n)|}{|t|} =0,$$
for any fixed~$t\in\R$. This says that~$u(te_n)=u(-te_n)$
for any~$t\in\R$

Since the problem is translation invariant, we can apply the
argument above in the neighborhood of any point, so we obtain that
\begin{equation}\label{9as12s}
u(p+te_n)=u(p-te_n)\end{equation}
for any~$p\in\R^n$ and any~$t\in\R$

Now take any point~$x\in\R^n$ and any~$\rho\in\R$.
We take~$p:=x+\frac{\rho\,e_n}{2}$ and~$t:=\frac{\rho}{2}$.
Notice that~$ p-te_n = x $ and~$p+te_n= x+\rho e_n$,
therefore~\eqref{9as12s}
implies that~$u(x)=u(x+\rho e_n)$, which completes the proof
of Theorem~\ref{MAIN 2}.

\vfill \end{document}